\newtheorem{prop}{Proposition}
\newtheorem{lemma}{Lemma}
\title{\LARGE \bf
Finite-Horizon  Markov  Decision Processes with \\ Sequentially-Observed Transitions
}
\author{{Mahmoud El Chamie $\ $}
and
{$\, $ Beh\c{c}et A\c{c}{\i}kme\c{s}e 
\thanks{$^{*}$ The authors are with the University of Texas at Austin, department of Aerospace Engineering and Engineering Mechanics, 210 E. 24th St., Austin, TX 78712 USA. Emails: {\em melchami@utexas.edu} and {\em behcet@austin.utexas.edu}}}
}
\begin{document}

\maketitle
\thispagestyle{empty}
\pagestyle{empty}

\begin{abstract}
Markov Decision Processes (MDPs) have been used to formulate many decision-making problems   in science and engineering. The objective is to synthesize  the best decision (action selection) policies to maximize expected rewards (or minimize costs) in a given  stochastic dynamical environment. 
In this paper, we extend this model by incorporating  additional information that  the transitions due to actions can be sequentially observed. The proposed  model benefits from this  information  and produces  policies with better  performance than those of  standard  MDPs. The paper also  presents an efficient offline  linear programming based algorithm to synthesize  optimal policies for the extended model.
\end{abstract}

\section{Introduction}

Markov Decision Processes (MDPs) have been used to formulate many decision-making problems  in a variety of  areas of science and engineering \cite{Parkes:2003Anm,Dolgov:2006Res,Doshi:2004Dyn}. MDPs have proved  useful in modeling decision-making problems for stochastic  dynamical systems where the dynamics cannot be fully captured by using first principle formulations. MDP models can be constructed by utilizing the available measured  data, which allows construction of state transition probabilities. Hence MDPs play a critical role in big-data analytics. Indeed  very popular   methods of machine learning  such as reinforcement and its variants  \cite{sutton1998introduction}\cite{szepesvari2010} are built upon the MDP framework.    With the increased interest and efforts in Cyber-Physical Systems (CPS), there is  even more interest in MDPs to facilitate rigorous construction of innovative hierarchical decision-making architectures, where MDP framework can integrate physics-based models with data-driven models.  Such decision architectures can utilize a systematic approach to bring physical devices together with software  to benefit many emerging engineering  applications, such as autonomous systems.    

In many applications \cite{feinberg2002handbook}\cite{altman:inria-00072663}, MDP models  are used to compute optimal  decisions when future actions contribute to the overall mission performance. 
Here  we consider MDP-based  stochastic decision-making models  \cite{Puterman:1994Mar}. 
An MDP model is composed of a set of  time instances (epochs), actions, states, and immediate rewards/costs. Actions transfer the system in a stochastic manner from one state to another and rewards are collected based on the actions taken at the corresponding states. Hence MDP models provide  analytical descriptions  of stochastic processes with  state and action spaces, the state transition probabilities as a function of actions, and with rewards as a function of the states and actions. The objective is to design the best decision (action selection) policies to maximize expected rewards (minimize costs) for a given MDP. 

    With the advent of Internet of Things (IoT) and the increasing sensing capabilities, increasingly large amounts of data are collected.  This paper aims to extend the typical MDP framework to exploit additional sensed information. In particular, we consider a scenario where  not only the current state of the agent is known but also  the transition due to an action can be observed in a sequential manner: The outcome of action 1 is observed and a decision is made on whether to rake the action or not, and this process is continued  until one of the actions (in the given order) is taken.   Decisions are taken at instances called \emph{phases}. A phase starts with an observation for the transition caused by an action  and ends with  a decision about whether to take this  action   or not.

MDPs have been widely studied  since the pioneering work of Bellman \cite{Bellman:1957Dyn}, which provided the foundation of dynamic programming, and the book of Howard  \cite{Howard:1960Dyn} that popularized the study of decision processes. The standard  MDP models are applied to diverse fields including robotics, automatic control, economics, manufacturing, and communication networks. There have been several extensions and generalizations of the MDP models to fit specific  application requirements and considerations into the models. Typical MDP problems assume that at every decision epoch, agents know their current state, and the reward for choosing an action, while the environment is stochastic, i.e.,  the transitions cannot be predicted in a deterministic manner.  For example  partially observed MDPs (POMDPs) extend the typical MDP problems to take into account  uncertainties in the agent  state knowledge \cite{Kaelbling:1998Pla}. There can also be uncertainties in state transition/reward models.  Learning methods are developed to handle such uncertainties  (e.g., reinforcement learning \cite{Kaelbling:1996Rei}).  In typical MDPs, decisions are taken on discrete epochs. Continuous-time MDPs \cite{Guo:2009Con} extend this model by relaxing the assumption of discrete events and models  to continuous time and  space models.  Another extension is the Bandit problem \cite{Auer:2002Fin}, where the agents  can observe the random reward of different actions and have to choose the actions that  maximize the sum of rewards through a sequence of repeated experiments. In other decisions-making problems, determination of  optimal stopping time is studied to  determine  optimal epoch for a particular action \cite[Chapter~13]{DeGroot:2004Opt}.   In other applications, multi-objective cost functions or constraints are considered for the computation of the optimal MDP policies \cite{altman1999constrained}.  

In most of the relevant literature, the extensions to  the standard  MDP models are obtained  by relaxing some of its assumptions (like observability of current state, known rewards, transition probabilities, etc.). In this paper, however, we extend typical MDP problems by considering a more general model when more information about the environment and the process  is available. This latter assumption is motivated by the fact that the evolving field of IoT is providing agents with a lot of additional data that can be utilized in the model to synthesize  better decision-making strategies. In particular, we assume that not only the current state, but the environmental transition due to possible actions are also observed in a sequential manner. We aim to build decision-making models that benefit from this class of  information  to generate policies having better total expected rewards. 

\section{Sequentially Observed MDP}
\subsection{Examples}
This section presents several motivating examples for sequentially observed MDPs.
\subsubsection{Routing}
 Consider a vehicle that aims to go to a final desired position  (or a packet if a computer network is considered) and there are  three possible routes from the current one-way street that  the vehicle is on (Fig. \ref{routing}). The current street and the exits form the shape of letter ``E", that is, if the  vehicle passes an turn then the corresponding route is ruled out.
 
 \begin{figure}
\begin{center}
\includegraphics[scale=0.25]{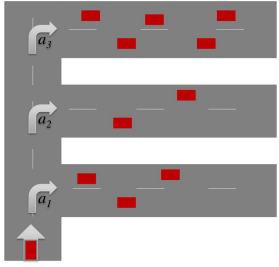}
\caption{The figure shows an example where sequential MDP models can be applied. The vehicle knows the historical data for the congestion for there separate  routes to a common destination. But once the vehicle is on the turn, it can observe the actual real-time congestion status of one route at a time with a fixed sequence of observations and only knows the expected congested status  of the upcoming routes. If the action to take the turn is rejected (route  not taken) the vehicle cannot come back.
} 
\label{routing}
\vspace{-20pt}
\end{center}
\end{figure}
 
 Each route can have congestion, for  which there is prior knowledge based on historical data.  The vehicle can only observe the current  traffic conditions  when it is at the turn. If the vehicle decided not to take one route based on the observed congestion, it cannot get back later after it observes the other route. The vehicle is forced to take one of the routes, so if it rejects all observed congested routes, it will be stuck with the last choice and should take it regardless of the route congestion status. The question here is whether the vehicle would take a route given the observed congestion and historical data for the next turn. Standard MDP models will select beforehand routes having the lowest average (expected) congestion regardless of the observed routes status. 
\subsubsection{University Admission}
Suppose that a university has a certain number of scholarships for a program. Applicants are interviewed in a sequential manner on different selection rounds, i.e., in the first round the candidates are interviewed, evaluated, and admission decisions are announced before other candidates can apply in the second round. If a student is granted a scholarship, the available funding is decreased and the system changes its state. The rewards obtained are assumed to be the evaluation of the profile of the selected candidates assuming all applicants can be evaluated and compared by a scoring function. The committee knows what the average score of applicants would be at different rounds (based prior data). Note that the evaluation committee can observe the profile of candidates at a given round, but they only know the average profile score of the next rounds. The question in this scenario is: Given the current (observed) applicant pool and expected pool for the next rounds, how many of the applicants in this round should be accepted? 
If we use a standard MDP model, then the solution would be to
select all candidates from the round with the highest average. Clearly this solution is not practical in this scenario because very important information are being discarded  and a better approach must be used.

\subsubsection{Market Investment}
Another possible application for the sequential MDP model proposed in this paper is the market investment. Suppose that an investor has certain amount of resources to invest in an open market (a market where prices change in a continuous manner like currency exchange). The investor knows on average the price values (for example low season and high season prices). However, in a given period known to have high prices, the investor observed that the market is announcing lower prices than usual. Should he invest in that period or should he wait to the next low season prices?
Again typical MDP solution would give before hand policies that do not take into account observed outcomes. An MDP solution in this scenario would behave inefficiently. 

\subsection{Model}
The new sequentially observed MDP model has the following components:  
\begin{itemize}
\item The current state and the transition probabilities are known, i.e., the probability of transitioning  from any state $i$ to another state $j$ when an action $a$ is taken. 
\item At a given decision epoch $t$, the agent observes the possible next state if action $a_1$ was taken, but only knows the transition probabilities for the rest of the actions. The agent must either accept or reject the transition due to $a_1$. Accepting the transition means the agent chose action $a_1$ at time epoch $t$, rejecting the transition means that the agent will not choose action $a_1$ and the action must be chosen from the remaining possible actions.
\item Only after the rejection of $a_1$, the agent can observe the deterministic transition if $a_2$ is taken, and only knows the probability of  transitions for the remaining actions. Again, accepting the transition means the agent has chosen action $a_2$ at decision epoch $t$. Rejecting the transition means that the agent will not choose action $a_1$ or $a_2$ and the action must be chosen from the remaining possible actions.
\item The procedure is repeated till the action $m-1$. If the observed transition due to $a_{m-1}$ was rejected, then the agent has no choice and must choose $a_m$ (without observing its corresponding transition). We say that the system is at \emph{phase} $k$ if the agent observes the transition due to action $a_k$ and has not yet made a decision (to reject or accept it).
\item Once any action is taken (accepting an observed transition or rejecting all observed transitions), the next decision epoch starts.
\end{itemize}    

Note that a typical MDP decision-making algorithm can be adopted as follows:   the decision policy is computed by using a standard MDP solution method \cite{Puterman:1994Mar} by ignoring the observed transitions. For example, if the optimal policy was to select $a_i^*$ at decision epoch $t$, then the agent would discard the observed transitions  for $a_1, \dots, a_{i-1}$, and would accept any observed transition for $a_{i}$ action. Our goal in this paper is  to take advantage of the  additional observed transitions to increase the expected rewards.

\begin{proof}[\bf Remark]
The proposed model is different from the well known ``secretary problem'' in MDP literature \cite{Ferguson:1989Who,Babaioff:2007Akn}. In the secretary problem, a fixed number of people are interviewed for a job in a sequential manner, and based on the (observed) rank of the current  interviewed candidates, a decision should be taken whether to accept or reject the last interviewed candidate. The main difference with the sequentially observed MDPs is that in the ``secretary problem'', observing a candidate changes the probability of future transitions (because of the correlation between the events).  However, in our model an observation is independent from the further environmental dynamics (i.e., observing a transition at a given phase does not change the transition probabilities for next phases or epochs).  Another fundamental difference is that our model does not necessarily have a stopping time, and the horizon can go to infinity which is not possible for the secretary problem.
\end{proof}

\section{Defining the MDP}
\subsection{States and Actions}
 Let the set $S=\{ 1, \dots , n\}$ be the set of states having a cardinality $|S|=n$. Let us define $\mathcal{A}_s=\{ 1, \dots ,m\}$  to be the set of actions available in state $s$ (without loss of generality the number of actions does not change with the state, i.e., $|\mathcal{A}_s|=m$ for any $s\in S$). We consider a discrete-time system where actions are taken at different decision epochs. Let $s(t)$ and $a(t)$ be respectively the state and action at the $t$-th decision epoch. 

\subsection{Decision Rule and Policy}
We define a decision rule $D_t$ at time $t$ to be the following randomized function  $$D_t : S \rightarrow \mathcal{A}_S$$ that defines for every state $s\in S$ a random variable $D_t(s)\in \mathcal{A}_{s}$ with some probability distribution defined over $\mathcal{P}(\mathcal{A}_s)$. In typical MDPs, the decision variables are directly the probability distribution of this random variable $p_{i}(a,t)=\text{Prob}[D_t=a|s(t)=i]$ for any action $a\in \mathcal{A}_i$ and given any state $i$.  In the sequential MDP, the decision variables are whether to accept or reject a given transition at phase $k$. We then define the decision variables as follows: 
\begin{align*}
P_i(j,k,t)&=\text{Prob\big [Accepting observed transition to state $j$ $|$}\\
&\hspace*{0.5cm}\text{System is in state $i$ and phase $k$ and epoch $t$\big ]}.
\end{align*}
Since there are only $m-1$ phases, we assume $P_i(j,k,t)=1$ if $k=m$.
In this new formulation, the order of the actions is important.

Let $$\pi =(D_1, D_2, \dots, D_{N-1})$$ be the policy for the decision making process given that there are $N-1$ decision epochs. Then in typical MDP, the decision $D_t$ is defined by the independent vector variables $\{\mathbf{p}_1(t), \dots,\mathbf{p}_n(t)\}$ where $\mathbf{p}_i$ is the vector having the probabilities $p_{i}(a,t)\geq 0$ for all $a\in \mathcal{A}_i$ and decision epoch $t$ and such that $\sum _ap_{i}(a,t)=1$. In the sequential MDP, the  decision $D_t$ is defined by the independent matrix variables $\{P_1(t), \dots, P_n(t)\}$ where $P_i(t)$ is the matrix having the probabilities $P_i(j,k,t)\in [0,1]$ for all destination states $j\in S$,  for $k=1,\dots, m$, and decision epoch $t$. For notation simplicity we will drop the index $t$ from the notation when there is no confusion and variables are denoted simply by $P_i$; the upcoming results  are for  time dependent cases.
 Note that this decision rule has a Markovian property because it depends only on the current state. Indeed this paper considers only Markovian policies, history dependent policies \cite{Puterman:1994Mar} are not considered.  

\subsection{Rewards}
Given a state $s\in S$ and action $a\in A$, we define the reward $r_t(s,a)\in \mathbb{R}$ to be any  real number and let $\mathcal{R}$ to be the set having these values. 
With a little abuse of notation, we define the expected  reward for a given  decision rule $D_t$ at time $t$ to be 
\begin{equation}\label{eq:reward}
r_t(s)=\mathbb{E}[r_t(s,D_t(s))]=\sum _{a\in \mathcal{A}_{s}}p_s(a)r_t(s,a),
\end{equation} and the vector $\mathbf{r}_t\in \mathbb{R}^n$ to be the vector with the expected rewards for each state. Given there are $N-1$ decision epochs, then there are $N$ reward stages and the final stage reward is given by $r_N(s)$ (or $\mathbf{r}_N$ the vector having as its elements the final reward at a given state).

\subsection{State Transitions}
We now define the transition probabilities as follows, $G_i(j, k,t)= \text{Prob}[s(t+1)=j|s(t)=i, \text{phase }  k]$, and $G_i(t)$ be the corresponding matrix (for simplicity we will drop the index $t$ from the notation when there is no confusion and transitions are denoted simply by $G_i$). Let $\mathcal{G}$ be the set having these transition matrices. Let's define an intermediate variable $q_i(a_k)$ for notational convenience, which is the probability of choosing action $a_k$ given that the previous actions $a_1,...,a_{k-1}$ are rejected 
$$q_i(a_k)=\sum _{j\in S}G_i(j, k)P_i(j,k).$$
Then  the probability that the agent chooses action $a_k$ is the probability that the agent rejects the first $k\!-\!1$ actions (i.e., $\prod _{l=1}^{k-1}\left(1-q_i(a_l)\right)$) and then accepts the $k$-th action (i.e., $q_i(a_k)$):
\begin{equation}\label{eq:pia}
p_i(a_k)=\left(\prod _{l=1}^{k-1}\left(1-q_i(a_l)\right)\right)q_i(a_k)\text{ if }1\leq k\leq m,
\end{equation}
where, by convention, $\prod _{l=1}^{k-1}\left(1-q_i(a_l)\right)=1$ if $k=1$. We observe that $q_i(a_k)\!=\!1$ if $k\!=\!m$. The above relation shows that the decision variables due to the typical MDP ($p_i(a_k)$ for $k=1,\dots, m$ and $i=1,\dots, n$) are a non-convex function of the decision variables of the sequential MDP ($P_i$ for $i=1,\dots, n$).
The transition probability from a state $i$ to a state $j$ is given by the probability to reach phase $k$ and transition to state $j$ is accepted, i.e.,
\begin{align}
M_t(j,i)&=\text{Prob}[s_{t+1}=j|s_t=i]\nonumber\\
&=\sum _{k=1}^m\left(\prod _{l=1}^{k-1}\left(1-q_i(a_l)\right)\right)G_i(j,k)P_i(j,k).\label{eq:Mji}
\end{align}
Also in this case, the transition is not linear in the decision variables for the sequentially observed MDP.
Let  $x_i(t)=\text{Prob}[s_t=i|s_1]$ be the probability of being at state $i$ at time $t$, and $\mathbf{x}(t)\in \mathbb{R}^m$ to be the vector of these probabilities. Then the system evolves according to the following recursive equation:
$$\mathbf{x}(t+1)=M_t \mathbf{x}(t),$$
where $M_t$ (or simply $M$) is the matrix having the elements $M_t(j,i)$ (or simply $M(j,i)$). It is important to note that the $i$-th column of $M$ (its transpose is denoted by $M^{Ti}$) is a function of the decision variables in the matrix $P_i$ only (i.e., independent of the variables of the matrices $P_{s}$ for $s\neq i$). 

\subsection{Markov Decision Processes (MDPs)}
Let $\gamma \in [0,1]$ be the discount factor, which represents the importance of a current reward in comparison to future possible rewards. We will consider $\gamma =1$ throughout  the paper, but the results  are not affected and remain applicable after a suitable scaling when $\gamma <1$. 

A discrete MDP is a 5-tuple $(S, A_S, \mathcal{G}, \mathcal{R}, \gamma )$ where $S$ is a finite set of states, $A_s$ is a finite set of actions available for  state $s$, $\mathcal{G}$ is the set that contains the transition probabilities   given the current state and current action, and $\mathcal{R}$ is the set of rewards at a given time epoch due to the current state and  action.

\subsection{Performance Metric}
 For a policy to be better than another policy we need to define a performance metric. We will use the expected discounted total reward for our performance study, 
 $$v_N^\pi=\mathbb{E}_{\mathbf{x}(1)}\left[\sum_{t=1}^{N-1}r_t(X_t,D_{t}(X_t))+ r_N(X_N)\right],$$
where $X_t$ is the state at decision epoch $t$ and the expectation is conditioned on a probability distribution over the initial states (i.e., $\mathbf{x}(1)\in \mathcal{P}(S)$ where $x_i(1)=\text{Prob}[s_1=i]$). It is worth noting that  both $X_t$ and $D_{t}(X_t)$ are random variables in the above expression. 

\subsection{Optimal Markovian Policy}
The optimal policy $\pi ^*$ is given as the policy that maximizes the performance measure, $\pi ^* = \text{argmax}_{\pi} v_N^\pi$, and $v^*_N$ to be the optimal value, i.e., $v_N^* = \text{max}_{\pi} v_N^\pi$.
Note that  the optimization variables of the above maximization are $P_1(t), \dots, P_n(t)$ for $t=1, \dots, N-1$.\footnote{Since $v^\pi_N$ is continuous in the decision variables that belong to a closed and bounded set, then the $\max$ is always attained and $\text{argmax}$ is well defined.} For the typical MDP, the \emph{backward induction} algorithm \cite[p.~92]{Puterman:1994Mar} gives the optimal policy as well as the optimal value. However, in our new model the optimization variables are different and another algorithm for finding optimal policies is needed. In the following sections, we will give such an algorithm for the sequential MDP (SMDP) and we will show its optimality using Bellman equations of dynamic programming.

\section{Dynamic Programming (DP)  Approach for  MDPs}
In this section, we  transform the MDP problem into a deterministic Dynamic Programming (DP) problem and use this approach to devise an efficient algorithm for finding optimal policies of the new introduced model.
First note  that the performance metric can be written as follows:
\begin{align*}
v_N^\pi&=\mathbb{E}_{\mathbf{x}(1)}[(\sum_{t=1}^{N-1}r_t(X_t,D_{t}(X_t)))+ r_t(X_N)]\\
&=\sum _{t=1}^{N-1}\mathbb{E}_{\mathbf{x}(1)}[r_t(X_t,D_{t}(X_t))] + \mathbb{E}_{\mathbf{x}(1)}[r_t(X_N)]\\
&=\sum _{t=1}^{N-1}\mathbb{E}_{\mathbf{x}(1)}[\mathbb{E}_{X_t}[r_t(X_t,D_{t}(X_t))]] + \mathbb{E}_{\mathbf{x}(1)}[\mathbb{E}_{X_N}[r_t(X_N)]]\\
&=\sum _{t=1}^{N}\mathbb{E}_{\mathbf{x}(1)}[\mathbf{e}_{X_t}^T\mathbf{r}_t]=\sum _{t=1}^{N}\mathbf{x}(t)^T\mathbf{r}_t,
\end{align*}
where $\mathbf{e}_s$ is the vector of all zeros except a value $1$ at the  position $s$. The last equality utilized the fact that $\mathbb{E}_{\mathbf{x}(1)}[X_t]=\mathbf{x}(t)$.

We can now give the DP  formulation. For notation simplicity, let $\mathbf{x}_t=\mathbf{x}(t)$. The discrete-time dynamical system describing the evolution of the density $\mathbf{x}_t$ can then be given by
\begin{align*}
\mathbf{x}_{t+1} &=f_t(\mathbf{x}_t,P_1(t), \dots, P_n(t)) \text{ for } t=1,\dots , N-1,
\end{align*} 
such that $f_t(\mathbf{x}_t,P_1(t), \dots, P_n(t)) = M_t\mathbf{x}_t$ where $M_t=M_t(P_1(t), \dots, P_n(t))$ is the transition matrix a function of the optimization variables. The elements of the $i$-th \emph{column} in $M_t$ are  functions of  only the elements in $P_i(t)$ matrix as mentioned earlier. The above dynamics show that the probability distribution  evolves deterministically. Our policy $\pi=(D_1, \dots, D_{N-1})$ consists of a sequence of functions that map states $\mathbf{x}_t$ into controls $P_i(t)=D_{i,t}(\mathbf{x}_t)$ for all $i$ in such a way that $D_{i,t}(\mathbf{x}_t) \in \mathcal{C}(\mathbf{x}_t)$ where $\mathcal{C}(\mathbf{x}_t)$ is the set of constraints on the control. Since the only constraints on the decision variables are that they are restricted to the interval $[0,1]$, then $\mathcal{C}(\mathbf{x}_t)$ is independent of $\mathbf{x}_t$ and all admissible controls belong to the same convex set $\mathcal{C}$ for any given state.

The additive reward per stage is defined as $g_N(\mathbf{x}_N)=\mathbf{x}_N^T\mathbf{r}_N$ and 
$$g_t(\mathbf{x}_t, P_1(t), \dots, P_n(t))=\mathbf{x}_t^T\mathbf{r}_t , \text{ for } t=1, \dots, N-1.$$ 
The dynamic programming then calculates the optimal value $v^*_N$ (and policy $\pi ^*)$ by running Algorithm \ref{alg2}\cite[Proposition 1.3.1, p.~23]{Bertsekas:2005Dyn}.
\begin{algorithm}
\caption{Dynamic Programming}
\label{alg2}
\begin{algorithmic} [1]
 \STATE Start with $J_N(\mathbf{x})=g_N(\mathbf{x})$
\STATE for $t=N-1,\dots , 1$
\begin{align*}
J_t(\mathbf{x})&=\max _{P_1(t),\dots, P_n(t)\in \mathcal{C}(\mathbf{x})}\Big \{g_t(\mathbf{x}, P_1(t),\dots, P_n(t))+\\
&\hspace{2cm}J_{t+1}(f_t(\mathbf{x}, P_1(t),\dots, P_n(t)))\Big \}.
\end{align*} 
\STATE {\bf Result:} $J_1(\mathbf{x})=v^*_N$.
\end{algorithmic}
\end{algorithm}

\begin{proof}[\bf Remark]
There are several  difficulties in applying the DP Algorithm \ref{alg2}. Note that in the term $J_{t+1}(f_t(\mathbf{x},P_1,\dots, P_n))$ used in the algorithm $P_i$s are the optimization variables. 
For a given $P_i$ and $\mathbf{x}$, numerical methods can be used to compute the value of  $J_{t+1}$. 
But since $P_i$ itself is an optimization variable,  the solution of the optimization problem in line $2$ of Algorithm \ref{alg2} can be very hard. 
In some special cases, for example when $J_t(\mathbf{x})$  can be expressed analytically  in a closed from, the solution complexity can be reduced significantly, as we will show next for the sequential MDP problems. 
\end{proof}


\subsection{Backward Induction for the sequential MDP model}
 This section presents  the optimal backward induction algorithm for solving the sequential MDP by using the dynamic programming approach. The set of admissible controls at time $t$ is given by $\mathcal{C}(\mathbf{x}_t)=\mathcal{C}$ defined as follows:
 $$0\leq P_i(j,k,t)\leq 1 \text{ for all }i\in S,j\in S, k\in \mathcal{A}_i $$
  Using the dynamic programming Algorithm \ref{alg2}, we can now give the following proposition:
\begin{prop}\label{prop:ClosedFormJ}
The term $J_t(\mathbf{x})$ in the dynamic programming algorithm for the sequential MDP  has the following closed-form solution:
$$J_t(\mathbf{x})=\mathbf{x}^TV^*_t,$$
where $V^*_t$ is a vector that satisfies the following recursion,
$V^*_N=\mathbf{r}_N$ and for $t=N-1, \dots, 1$ we have
$$V^*_t(i)=\max _{P_i(t)}\left\{r_t(i)+M^{Ti}_tV^*_{t+1}\right\} \text{ for } i=1,\dots, n.$$
\end{prop}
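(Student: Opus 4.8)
The plan is to prove the closed form by backward induction on $t$, peeling off one step of the dynamic programming recursion in Algorithm~\ref{alg2} at a time. The base case is immediate: $J_N(\mathbf{x}) = g_N(\mathbf{x}) = \mathbf{x}^T\mathbf{r}_N$, which is exactly $\mathbf{x}^TV_N^*$ with $V_N^* = \mathbf{r}_N$, matching the stated initialization of the recursion.

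For the inductive step, assume $J_{t+1}(\mathbf{x}) = \mathbf{x}^TV_{t+1}^*$. Substituting $g_t(\mathbf{x},P_1(t),\dots,P_n(t)) = \mathbf{x}^T\mathbf{r}_t$ and $f_t(\mathbf{x},P_1(t),\dots,P_n(t)) = M_t\mathbf{x}$ into line~2 of Algorithm~\ref{alg2} and using the hypothesis,
\begin{align*}
J_t(\mathbf{x}) &= \max_{P_1(t),\dots,P_n(t)\in\mathcal{C}} \left\{ \mathbf{x}^T\mathbf{r}_t + (M_t\mathbf{x})^TV_{t+1}^* \right\} \\
&= \max_{P_1(t),\dots,P_n(t)\in\mathcal{C}} \; \sum_{i=1}^n x_i\left( r_t(i) + M_t^{Ti}V_{t+1}^* \right),
\end{align*}
where $M_t^{Ti}$ is the transpose of the $i$-th column of $M_t$ and we used $\mathbf{x}^TM_t^TV_{t+1}^* = \sum_i x_i\,M_t^{Ti}V_{t+1}^*$. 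Here both $r_t(i)$ (via \eqref{eq:reward} and \eqref{eq:pia}) and $M_t^{Ti}$ depend on the block $P_i(t)$ only, which is why $r_t(i)$ must remain inside the maximization over $P_i(t)$ in the statement. It is worth recording that throughout this recursion $\mathbf{x}$ stays a probability vector, since each $M_t$ is column-stochastic: $\sum_j M_t(j,i) = \sum_{k=1}^m \left(\prod_{l=1}^{k-1}(1-q_i(a_l))\right) q_i(a_k) = \sum_{k=1}^m p_i(a_k) = 1$ by a telescoping sum using $q_i(a_m) = 1$.

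The crux is the separation of the maximization. The feasible set $\mathcal{C}$ is the product of the per-block boxes $\{0\le P_i(j,k,t)\le 1\}$, and, as just noted, the $i$-th summand of the objective involves only the block $P_i(t)$; hence the joint maximum over $(P_1(t),\dots,P_n(t))$ equals the sum over $i$ of the maxima over $P_i(t)$ taken separately. Finally, because $x_i\ge 0$ for a probability vector, each scalar $x_i$ can be pulled outside its own maximum, giving
$$J_t(\mathbf{x}) = \sum_{i=1}^n x_i \max_{P_i(t)} \left\{ r_t(i) + M_t^{Ti}V_{t+1}^* \right\} = \mathbf{x}^TV_t^*$$
with $V_t^*(i)$ exactly as in the statement, which closes the induction.

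I expect the decoupling step to be the only real obstacle, and it rests on two ingredients already established in the paper — that the $i$-th column of $M_t$ is a function of $P_i(t)$ alone, and that $\mathcal{C}$ factors over $i$ — combined with the nonnegativity $x_i\ge 0$, which is precisely what turns $\max x_i[\cdot]$ into $x_i\max[\cdot]$ (and is the reason the closed form is asserted for probability vectors rather than arbitrary $\mathbf{x}$; the DP recursion only ever evaluates $J_t$ at such vectors). One should also check that each per-block maximum is attained, which is immediate from continuity of the objective in $P_i(t)$ and compactness of the box.
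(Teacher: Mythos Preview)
Your proof is correct and follows essentially the same backward-induction argument as the paper: apply the induction hypothesis to $J_{t+1}(M_t\mathbf{x})$, expand as $\sum_i x_i(r_t(i)+M_t^{Ti}V_{t+1}^*)$, and use separability of the objective together with $x_i\ge 0$ to interchange the sum and the maximization. Your version is somewhat more explicit about why the decoupling is valid (factorization of $\mathcal{C}$ and attainment of the maximum), but the route is the same.
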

\begin{proof}
We will show that by induction. From the definition of $g_N(.)$ we have the base case satisfied (i.e., $J_t(\mathbf{x})=\mathbf{x}^T\mathbf{r}_N=\mathbf{x}^TV^*_{N}$. Suppose the hypothesis is true from $N-1,\dots, t+1$, then we show it is true for $t$. From the DP algorithm, we can write  
\begin{align}
J_{t}(\mathbf{x})&= \max _{P_1(t),\dots, P_n(t)\in \mathcal{C}}\big \{\mathbf{x}^T\mathbf{r}_t +J_{t+1}(M_t\mathbf{x})\big \}\label{eq:line1} \\
&=\max _{P_1(t),\dots, P_n(t)\in \mathcal{C}} \{\mathbf{x}^T\mathbf{r}_t+\mathbf{x}^TM_t^TV^*_{t+1}\}\label{eq:line2} \\
&=\max _{P_1(t),\dots, P_n(t)\in \mathcal{C}} \{ \sum _i x_i (r_t(i)+M^{Ti}_tV^*_{t+1})\}\label{eq:line3}\\
&=\sum _i x_i\left(\max _{P_i(t)\in C}\left\{ r_t(i)+M^{Ti}_tV^*_{t+1}\right\}\right)\label{eq:line4}
\end{align}
where $M^{Ti}_t$ indicates the transpose of the $i$-th column of $M$ which is a function of the decision variables of the $P_i$ matrix only. The transition from \eqref{eq:line1} to \eqref{eq:line2} is due to the induction assumption, and the transition from \eqref{eq:line3} to \eqref{eq:line4}  is because $x_i\geq 0$ for all $i$ and the function is  separable in terms of the optimization variables. The maximization inside the parenthesis is nothing but $V^*_{t}(i)$, then $J_{t}(\mathbf{x})=\sum _i x_iV^*_{t}(i) = \mathbf{x}^T V^*_{t}$ and this ends the proof.
\end{proof}

 Notice that $J_{t}(\mathbf{x})$ has a closed-form equation as function of $\mathbf{x}$ and so it suffices  the calculation of $V^*_{t}$ for $t=N,\dots ,1$  for finding the optimal value of the MDP given by $v^*_N=J_1(\mathbf{x}_1)=\mathbf{x}_1^TV^*_{1}$. The backward induction algorithm is given in Algorithm \ref{algSMDP}.
 
 \begin{algorithm}
\caption{Backward Induction: Sequential MDP Optimal Policy}
\label{algSMDP}
\begin{algorithmic} [1]
 \STATE {\bf Definitions:} For any state $s\in S$, we define $V_t^\pi(s)=\mathbb{E}_{\mathbf{x}_t=\mathbf{e}_s}\left[\sum_{k=t}^{N-1}r_k(X_k,D_k(X_k))+ r_k(X_N) \right]$ and $V_t^*(s)=\text{max}_{\pi} V_t^\pi$ given that $s_t=s$.
\STATE Start with $V^*_N(s)=r_N(s)$
\STATE for $t=N-1, \dots, 1$ given $V_{t+1}^*$ and for $s=1,\dots, n$ calculate the optimal value
  $$V_{t}^*(s)=\max _{P_s\in \mathcal{C}} \left\{ r_t(s) + \sum _{j\in S}M_t(j,s)V_{t+1}^*(j)\right\}$$
  and the optimal policy $P^*_s(t)$ given by: 
  $$P^*_s(t)=\underset{P_s\in \mathcal{C}}{\text{argmax}} \left\{ r_t(s) + \sum _{j\in S}M_t(j,s)V_{t+1}^*(j)\right\}$$
\STATE {\bf Result:} $V_1^*(s_1)=v_N^*$ where $s_1$ is the initial state.
\end{algorithmic}
\end{algorithm}

 {\bf Remark:} We want to stress two points about the algorithm. First, the policy calculated by Algorithm \ref{algSMDP} is optimal (maximizing the total expected reward) because of line 3 in Algorithm \ref{alg2} and Proposition \ref{prop:ClosedFormJ}. Second,   $r_t(s)$ and $M_t(j,s)$ are both  functions of the decision variables in $P_i$. In typical MDPs, these values are simply linear in the decision variables. However, in the proposed  sequential MDP  model, these values are non-convex in the decision variables and a further processing is needed for efficient implementation of the algorithm, which is discussed next. 
  
\subsection{Efficient Implementation of Algorithm \ref{algSMDP}}
In the internal loop of Algorithm \ref{algSMDP}, the optimal value at a given decision epoch $t$ is given by the following equation: 
\begin{equation}\label{eq:Vtstar}
V_{t}^*(i)=\max _{P_i\in \mathcal{C}}V_{t}(i),
\end{equation}
where $V_{t}(i)= r_t(i) + \sum _{j\in S}M_t(j,i)V_{t+1}^*(j)$.
In this formulation, $r_t(i)$ and $M_t(j,i)$ are  functions of the decision variable $P_i(t)$, for given state   $i$  and time epoch  $t$. In particular, the explicit expression can be deduced from Eq.~\eqref{eq:reward}, Eq.~\eqref{eq:pia}, and Eq.~\eqref{eq:Mji} as follows:
\begin{align*}
r_t(i)&=\sum _{a\in \mathcal{A}_{s}}p_i(a)r_t(i,a)\\
&=\sum _{k=1}^m\left(\prod _{l=1}^{k-1}\left(1-q_i(a_l)\right)\right)q_i(a_k)r_t(i,a_k).
\end{align*}
and 
\begin{equation}
M_t(j,i)=\sum _{k=1}^m\left(\prod _{l=1}^{k-1}\left(1-q_i(a_l)\right)\right)G_i(j,k)P_i(j,k),
\end{equation}
where $q_i(a_k)=\sum _{j}G_i(j,k)P_i(j,k)$. By substituting these equations in the expression of  $V_{t}(i)$, we obtain
\begin{align}
V_{t}(i)&=r_t(i) + \sum _{j\in S}M_t(j,i)V_{t+1}^*(j)\\
&\hspace*{-0.5cm}=\sum _{k=1}^m\left(\prod _{l=1}^{k-1}\left(1-q_i(a_l)\right)\right)\left(\sum _{j=1}^nG_i(j,k)P_i(j,k)\right)r_t(i,a_k)\nonumber\\
&\hspace*{-0.4cm}+\sum _{j=1}^n\left(\sum _{k=1}^m\left(\prod _{l=1}^{k-1}\left(1-q_i(a_l)\right)\right)G_i(j,k)P_i(j,k)V_{t+1}^*(j)\right)\\
&\hspace*{-0.5cm}=\sum_{k=1}^m\sum_{j=1}^n\left(r_t(i,a_k)+V_{t+1}^*(j)\right)G_i(j,k)X_i(j,k)\\
&\hspace*{-0.5cm}=\sum_{k=1}^m\sum_{j=1}^nH_i(j,k)X_i(j,k).
\end{align}
where 
\begin{eqnarray}
X_i(j,k)&:=&\prod _{l=1}^{k-1}\left(1-q_i(a_l)\right)P_i(j,k) \label{eq:X} \\ 
H_i(j,k)&:=&\left(r_t(i,a_k)+V_{t+1}^*(j)\right)G_i(j,k). \nonumber
\end{eqnarray}
Note that $H_i(j,k)$ is independent of the decision variables. 

For efficient implementation of the algorithm, it remains to show what conditions should $X_i(j,k)$ satisfy so that the mapping $X_i(j,k)=\prod _{l=1}^{k-1}\left(1-q_i(a_l)\right)P_i(j,k)$ is invertable. Notice that if $q_i(a_l)\neq 1$ for $l=1,\dots, m-1$, then the mapping is one-to-one mapping and we will give the expression for $P_i$ in terms of $X_i$ shortly after. If there exists $l$ such that $q_i(a_l)=1$, then the phases $k>l_{min}$ are not reached because an earlier action must necessarily be accepted where $l_{min}=\min \{l|q_i(a_l)=1\}$. This means that $V_t(i)$ is independent of $P_i(j,k)$ when $k>l_{min}$ (i.e., the optimal value is not affected by these variables) and without loss of generality we can consider $P_i(j,k)=1$ for $j=1,\dots, n$ and $k=l_{min}+1,\dots ,m$. 

 We can give now the expression of $P_i$ in terms of $X_i$ by the following lemma:
\begin{lemma} 
For a given state $i$, the following equation holds for $X_i(j,k)$,  $j=1,\dots, n$ and $k=1,\dots, m$, in Eq. \eqref{eq:X}:
\begin{equation}
X_i(j,k)=\left(1-\sum _{l=1}^{k-1}\sum_{s=1}^nG_i(s,l)X_i(s,l)\right)P_i(j,k).
\end{equation}
\end{lemma}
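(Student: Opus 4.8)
The plan is to substitute the definition \eqref{eq:X} into the right-hand side, pull the already-accumulated ``rejection product'' out of the inner sum over $s$, and then recognize a telescoping identity. Concretely, for each $l$ write
\begin{equation*}
\sum_{s=1}^n G_i(s,l)X_i(s,l)=\sum_{s=1}^n G_i(s,l)\left(\prod_{r=1}^{l-1}\bigl(1-q_i(a_r)\bigr)\right)P_i(s,l)=\left(\prod_{r=1}^{l-1}\bigl(1-q_i(a_r)\bigr)\right)q_i(a_l),
\end{equation*}
using the definition $q_i(a_l)=\sum_{s}G_i(s,l)P_i(s,l)$. Hence the factor multiplying $P_i(j,k)$ on the right-hand side of the claimed identity equals $1-\sum_{l=1}^{k-1}\bigl(\prod_{r=1}^{l-1}(1-q_i(a_r))\bigr)q_i(a_l)$, and it remains to show this equals $\prod_{l=1}^{k-1}(1-q_i(a_l))$, which is exactly the prefactor in the definition \eqref{eq:X} of $X_i(j,k)$.

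The identity $\prod_{l=1}^{k-1}(1-q_i(a_l))=1-\sum_{l=1}^{k-1}\bigl(\prod_{r=1}^{l-1}(1-q_i(a_r))\bigr)q_i(a_l)$ I would prove by induction on $k$. The base case $k=1$ is trivial since both sides equal $1$ (empty product) and $0$ (empty sum) respectively, giving $1=1-0$. For the inductive step, adding the $l=k$ term to the sum contributes $\bigl(\prod_{r=1}^{k-1}(1-q_i(a_r))\bigr)q_i(a_k)$, and combining with the inductive hypothesis gives
\begin{equation*}
1-\left[\prod_{l=1}^{k-1}\bigl(1-q_i(a_l)\bigr)\right]+\left[\prod_{r=1}^{k-1}\bigl(1-q_i(a_r)\bigr)\right]q_i(a_k)=1-\left[\prod_{l=1}^{k-1}\bigl(1-q_i(a_l)\bigr)\right]\bigl(1-q_i(a_k)\bigr)=1-\prod_{l=1}^{k}\bigl(1-q_i(a_l)\bigr),
\end{equation*}
which is the claim at $k$. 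Multiplying through by $P_i(j,k)$ then yields the stated equation.

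Alternatively, and perhaps more cleanly for the write-up, one can avoid an explicit induction by observing that the partial sums telescope directly: since $\bigl(\prod_{r=1}^{l-1}(1-q_i(a_r))\bigr)q_i(a_l)=\prod_{r=1}^{l-1}(1-q_i(a_r))-\prod_{r=1}^{l}(1-q_i(a_r))$, the sum over $l=1,\dots,k-1$ collapses to $1-\prod_{l=1}^{k-1}(1-q_i(a_l))$. This is really a probabilistic statement — the probability that some action among $a_1,\dots,a_{k-1}$ is accepted equals one minus the probability that all are rejected — but it is cleanest to present it as the algebraic telescoping above.

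I do not anticipate a genuine obstacle here; the only thing to be careful about is the interchange of the product prefactor with the sum over destination states $s$ (legitimate because $\prod_{r=1}^{l-1}(1-q_i(a_r))$ does not depend on $s$) and the handling of the empty-product/empty-sum conventions at $k=1$, which match those already fixed after Eq.~\eqref{eq:pia}. The degenerate case $q_i(a_l)=1$ for some $l<k$ was already dispatched in the paragraph preceding the lemma (those variables $P_i(j,k)$ are set to $1$ and do not affect $V_t(i)$), so the identity need only be verified on the regime where all the relevant prefactors are nonzero, though in fact the algebraic identity above holds regardless.
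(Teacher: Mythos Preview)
Your proof is correct and follows essentially the same route as the paper: both reduce the lemma to the identity $\prod_{l=1}^{k-1}(1-q_i(a_l))=1-\sum_{l=1}^{k-1}\sum_s G_i(s,l)X_i(s,l)$ and establish it by induction on $k$. Your preliminary simplification $\sum_s G_i(s,l)X_i(s,l)=\bigl(\prod_{r=1}^{l-1}(1-q_i(a_r))\bigr)q_i(a_l)$ and the telescoping alternative are clean touches not spelled out in the paper, but the argument is the same in substance.
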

\begin{proof}
We will prove this lemma by showing that $\prod _{l=1}^{k-1}\left(1-q_i(a_l)\right)=1-\sum _{l=1}^{k-1}\sum_{s=1}^nG_i(s,l)X_i(s,l)$ by induction. It is true for $k=2$ by the definition of $q_i(a_l)$. Suppose it is true till $k-2$, and let us show it true for $k-1$. We have 
\begin{align*}
\prod _{l=1}^{k-1}\left(1-q_i(a_l)\right)&=\left(\prod _{l=1}^{k-2}\left(1-q_i(a_l)\right)\right)\left(1-q_i(a_{k-1})\right)\\
&=\Big(\prod _{l=1}^{k-2}\left(1-q_i(a_l)\right)\\
&\hspace*{0.5cm}-\sum _{s}G_i(s,k-1)X_i(s,k-1)\Big)\\
&=1-\sum _{l=1}^{k-1}\sum_{s=1}^nG_i(s,l)X_i(s,l).
\end{align*}
where the last equality uses the induction hypothesis. 
\end{proof}
It remains to derive the constraints on $X_i(j,k)$ when $P_i\in \mathcal{C}$. Since $P_i(j,k)\in [0,1]$ for all  $j=1,\dots ,n$ and $k=1,\dots, m-1$, then we can derive the following conditions:
$$0\leq X_i(j,k)\leq 1-\sum _{l=1}^{k-1}\sum_{s=1}^nG_i(s,l)X_i(s,l),$$
and since by definition $P_i(j,m)=1$ for all $j=1,\dots, n$:
$$X_i(j,m)=1-\sum _{l=1}^{m-1}\sum_{s=1}^nG_i(s,l)X_i(s,l).$$

As a result, $V^*_t(i)$ is the solution of the following linear program
\begin{equation}\label{eq:LPd}
\begin{aligned}
\underset{X_i}{\text{maximize}}& \hspace*{0.5cm} \sum_{k=1}^m\sum_{j=1}^nH_i(j,k)X_i(j,k)\\
\text{subject to:}& \text{ for }j=1,\dots, n \text{ and }k=1,\dots, m-1\\
&\hspace*{-1cm} 0\leq X_i(j,k)\leq 1-\sum _{l=1}^{k-1}\sum_{s=1}^nG_i(s,l)X_i(s,l),\\
&\hspace*{-1cm} X_i(j,m)=1-\sum _{l=1}^{m-1}\sum_{s=1}^nG_i(s,l)X_i(s,l).
\end{aligned}
\end{equation}

To write it in matrix form, let $\mathbf{1}_n$ be the vector of all ones and dimension $n$, $J=\mathbf{1}_n\mathbf{1}_n^T$, and $B$ be a constant $m\times m$ matrix defined as $B(l,k)=1$ if $k>l$ and $B(l,k)=0$ otherwise. 
\begin{lemma}
The linear program \eqref{eq:LPd} can be written in matrix-form as follows:
\begin{equation}\label{eq:LP}
\begin{aligned}
\underset{X_i}{\text{maximize}}& \hspace*{0.5cm} Tr(H_i^TX_i)\\
\text{subject to}&\hspace*{0.5cm} 0\leq X_i+J(G_i\odot X_i)B\leq \mathbf{1}_n\mathbf{1}_m^T\\
& \left(X_i+J(G_i\odot X_i)B\right)\mathbf{e}_m= \mathbf{1}_n.
\end{aligned}
\end{equation}
\end{lemma}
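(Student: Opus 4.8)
The claim is a purely notational restatement of the linear program~\eqref{eq:LPd}, so the plan is to rewrite each ingredient of~\eqref{eq:LPd} in matrix form and to check that the objective and the feasible set agree entrywise. Throughout, $X_i$, $G_i$, $H_i$ are treated as $n\times m$ matrices whose $(j,k)$ entries are $X_i(j,k)$, $G_i(j,k)$, $H_i(j,k)$, and $\mathbf e_m$ is the $m$-th coordinate vector of $\mathbb{R}^m$. The objective is immediate: by the definition of the trace of a matrix product, $\mathrm{Tr}(H_i^TX_i)=\sum_{j=1}^n\sum_{k=1}^m H_i(j,k)X_i(j,k)$, which is the cost appearing in~\eqref{eq:LPd}.

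The heart of the argument is to identify the running sum $c_k:=\sum_{l=1}^{k-1}\sum_{s=1}^n G_i(s,l)X_i(s,l)$, which occurs in every constraint of~\eqref{eq:LPd}, with a single matrix expression. I would evaluate $J(G_i\odot X_i)B$ in two stages. The Hadamard product $G_i\odot X_i$ has $(s,l)$ entry $G_i(s,l)X_i(s,l)$; right-multiplying by $B$ (whose $(l,k)$ entry is $1$ exactly when $k>l$) replaces the $(s,k)$ slot by the partial column sum $\sum_{l=1}^{k-1}G_i(s,l)X_i(s,l)$, the strictly-upper-triangular pattern of $B$ being exactly what produces the index range $l=1,\dots,k-1$; then left-multiplying by $J=\mathbf 1_n\mathbf 1_n^T$ sums these over the row index $s$ and replicates the result into every row, so that $[J(G_i\odot X_i)B]_{jk}=c_k$ for all $j$. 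Hence $[X_i+J(G_i\odot X_i)B]_{jk}=X_i(j,k)+c_k$. The point worth flagging is that this entry does not depend on $j$; that is what lets the family of scalar bounds ``$X_i(j,k)\le 1-c_k$ for every $j$'' collapse into one entrywise matrix inequality.

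With this identity the constraints translate directly. For $k\le m-1$, the upper bound $X_i(j,k)\le 1-c_k$ becomes $X_i(j,k)+c_k\le 1$, that is, the right half of $0\le X_i+J(G_i\odot X_i)B\le \mathbf 1_n\mathbf 1_m^T$ on the first $m-1$ columns, and the non-negativity $X_i(j,k)\ge 0$ is the left half there (this is consistent because the accumulated mass $c_k$ is non-negative on the feasible region, as $G_i$ has non-negative entries and the earlier columns of $X_i$ are themselves non-negative). The equality $X_i(j,m)=1-c_m$, holding for every $j$, is precisely the $m$-th column identity $(X_i+J(G_i\odot X_i)B)\mathbf e_m=\mathbf 1_n$. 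Finally one checks that extending the two inequalities to the $m$-th column is harmless: given this equality, the $m$-th column of $X_i+J(G_i\odot X_i)B$ equals $\mathbf 1_n$, which lies between $0$ and $\mathbf 1_n$, so the $k=m$ slice of the inequality is redundant.

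I expect the only real difficulty to be index bookkeeping — fixing the orientation of $B$ so that right-multiplication accumulates the columns strictly before $k$ rather than up to and including $k$, and making sure the phase-$m$ block of constraints is supplied by the equality rather than duplicated — not anything conceptual. Once $[J(G_i\odot X_i)B]_{jk}=c_k$ is established, \eqref{eq:LP} and~\eqref{eq:LPd} are the same linear program, with optimal value $V^*_t(i)$, written in two notations.
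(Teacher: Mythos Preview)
The paper states this lemma without proof, so there is nothing to compare your argument against; your entrywise computations (the trace identity for the objective and the key identity $[J(G_i\odot X_i)B]_{jk}=c_k$) are exactly the kind of verification the paper omits, and those parts are correct.

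There is, however, one genuine gap in your equivalence argument. For the lower bound you write that ``the non-negativity $X_i(j,k)\ge 0$ is the left half there,'' but the left half of the matrix inequality is $X_i(j,k)+c_k\ge 0$, i.e.\ $X_i(j,k)\ge -c_k$, which is strictly weaker than $X_i(j,k)\ge 0$ whenever $c_k>0$. Your parenthetical (``consistent because $c_k\ge 0$'') only shows that feasibility in~\eqref{eq:LPd} implies feasibility in~\eqref{eq:LP}; it does not give the converse, and the converse can fail. For a concrete instance take $n=1$, $m=3$, $G_i\equiv 1$: the point $X_i=(0.5,\,-0.5,\,1)$ satisfies all constraints of~\eqref{eq:LP} (columns of $X_i+J(G_i\odot X_i)B$ are $0.5,\,0,\,1$) but violates $X_i(1,2)\ge 0$ in~\eqref{eq:LPd}. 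Since $H_i(j,k)=(r_t(i,a_k)+V^*_{t+1}(j))G_i(j,k)$ and the paper allows rewards to be arbitrary reals, $H_i$ can have negative entries, and then such points can strictly improve the objective of~\eqref{eq:LP} over~\eqref{eq:LPd}. So as written,~\eqref{eq:LP} is a relaxation of~\eqref{eq:LPd}, not an exact rewriting; either the lemma is slightly imprecise, or an additional argument (e.g.\ restricting to $X_i\ge 0$, or showing the relaxation is tight for the $H_i$ arising here) is needed to complete your proof.
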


Let $z(k)=1-\sum _{l=1}^{k-1}\sum_{s=1}^nG_i(s,l)X_i^*(s,l)$ if $k=2,\dots, m$ and $z(1)=1$. The following proposition summarizes our results
\begin{prop}
For a given  decision epoch $t$ and  state $i$,  the optimal value and optimal policy terms in Algorithm~\ref{algSMDP} are given  by
$$V^*_t(i)=Tr(H_i^TX_i^*),$$
and for $j=1,\dots, n$ and $k=1, \dots, m$
\begin{equation}
P^*_i(j,k,t)=
\begin{cases}
X^*_i(j,k)/z(k) &\text{ if }z(k)>0,\\
1 &\text{ else }.
\end{cases}
\end{equation}
where $X_i^*$ is the solution of  the linear program \eqref{eq:LP}.
\end{prop}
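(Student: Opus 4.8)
The plan is to prove both assertions simultaneously by a change of variables: show that the substitution $X_i(j,k)=\prod_{l=1}^{k-1}(1-q_i(a_l))\,P_i(j,k)$ sets up a correspondence between the admissible set $\mathcal{C}=\{P_i:\,0\le P_i(j,k)\le1\}$ and the feasible set of the linear program~\eqref{eq:LP}, under which the inner objective $V_t(i)=r_t(i)+\sum_jM_t(j,i)V^*_{t+1}(j)$ of line~3 of Algorithm~\ref{algSMDP} turns into the linear functional $\Tr(H_i^TX_i)$. Most ingredients are already in place in the excerpt: the algebraic identity $V_t(i)=\sum_{k,j}H_i(j,k)X_i(j,k)=\Tr(H_i^TX_i)$; the relation $X_i(j,k)=z(k)\,P_i(j,k)$ with $z(k)=1-\sum_{l=1}^{k-1}\sum_sG_i(s,l)X_i(s,l)=\prod_{l=1}^{k-1}(1-q_i(a_l))$ from the first of the two preceding lemmas; and the matrix rewriting of the feasible set from the second. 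What remains is (i) to show the image of $\mathcal{C}$ under the substitution is exactly the feasible set of~\eqref{eq:LP}, (ii) to conclude the two optimal values coincide, and (iii) to verify that the stated case formula recovers an optimal $P^*_i$.

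For the forward inclusion, given $P_i\in\mathcal{C}$ I would observe that each $q_i(a_l)=\sum_jG_i(j,l)P_i(j,l)$ lies in $[0,1]$ since $G_i(\cdot,l)$ is a probability distribution, so $z(k)=z(k-1)\bigl(1-q_i(a_{k-1})\bigr)$ is a product of numbers in $[0,1]$ and hence $z(k)\ge0$; combined with $P_i(j,k)\in[0,1]$ and $X_i(j,k)=z(k)P_i(j,k)$ this gives $0\le X_i(j,k)\le z(k)$ for $k\le m-1$ and $X_i(j,m)=z(m)$ (using $P_i(j,m)=1$), i.e.\ $X_i$ satisfies the constraints of~\eqref{eq:LPd}. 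For the reverse inclusion, starting from any feasible $X_i$ I would first note the constraints force $z(k)\ge0$ (inductively: $\sum_sG_i(s,k)X_i(s,k)\le z(k)\sum_sG_i(s,k)=z(k)$ yields $z(k+1)\ge0$), then define $P_i$ by the case formula of the proposition and check by induction on $k$ that $\prod_{l=1}^{k-1}(1-q_i(a_l))=z(k)$ for the $q_i$ induced by this $P_i$: when $z(k)>0$, $q_i(a_k)=z(k)^{-1}\sum_sG_i(s,k)X_i(s,k)$, so $z(k)\bigl(1-q_i(a_k)\bigr)=z(k)-\sum_sG_i(s,k)X_i(s,k)=z(k+1)$; when $z(k)=0$, feasibility forces $X_i(s,k)=0$ for all $s$, so both sides stay $0$. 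It follows that $z(k)P_i(j,k)=X_i(j,k)$, so $X_i$ is the substitution-image of this $P_i$, and $P_i\in\mathcal{C}$ because $X_i(j,k)/z(k)\in[0,1]$ whenever $z(k)>0$ (as $0\le X_i(j,k)\le z(k)$), $P_i(j,k)=1\in[0,1]$ otherwise, and $P_i(j,m)=1$ in all cases (using $X_i(j,m)=z(m)$).

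With the correspondence established, since $V_t(i)$ viewed as a function of $P_i$ equals $\Tr(H_i^TX_i)$ viewed as a function of its image $X_i$, the problem $\max_{P_i\in\mathcal{C}}V_t(i)$ and the linear program~\eqref{eq:LP} have the same optimal value; both maxima are attained because $\mathcal{C}$ is compact and $V_t(i)$ is continuous, and the feasible set of~\eqref{eq:LP} is compact with a linear objective. Hence $V^*_t(i)=\Tr(H_i^TX^*_i)$, the first claim. For the second, the $P^*_i$ built from the optimal $X^*_i$ via the recovery formula lies in $\mathcal{C}$ and has substitution-image $X^*_i$, so evaluating $V_t(i)$ at $P^*_i$ yields $\Tr(H_i^TX^*_i)=V^*_t(i)$; thus $P^*_i$ attains the maximum in line~3 of Algorithm~\ref{algSMDP} and is an optimal policy term.

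I expect the main obstacle to be the handling of the degenerate phases, i.e.\ the indices $k$ with $z(k)=0$: there one must argue that the phase is unreachable so that $P_i(j,k)$ is immaterial, that the choice $P^*_i(j,k)=1$ is both admissible and consistent with the forced value $X^*_i(j,k)=0$, and that the induction $\prod_{l=1}^{k-1}(1-q_i(a_l))=z(k)$ does not break at such $k$. This is exactly the situation already anticipated in the $l_{\min}$ discussion preceding the first lemma, and the proof should simply make that reasoning explicit.
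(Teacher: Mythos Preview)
Your proposal is correct and follows the same approach as the paper: both argue that the change of variables $X_i(j,k)=\prod_{l<k}(1-q_i(a_l))P_i(j,k)$ makes the optimization over $P_i\in\mathcal{C}$ equivalent to the linear program~\eqref{eq:LP}, handling the degenerate phases $z(k)=0$ via the convention $P_i(j,k)=1$. The paper's own proof is a one-sentence sketch of this equivalence, whereas you have carefully filled in the forward and reverse inclusions and the induction verifying $\prod_{l<k}(1-q_i(a_l))=z(k)$; your treatment of the $z(k)=0$ case is exactly the $l_{\min}$ reasoning the paper alludes to but does not spell out.
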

\begin{proof}
The proof is based on the fact that the linear program in the decision variables $X_i$ is equivalent to the original optimization over the $P_i$ variables because the mapping between the variables is one-to-one mapping when considering the additional (redundant) constraints: $P_i(j,k)=1$ for $j=1,\dots, n$ and $k=l_{min}+1,\dots ,m$.
\end{proof}
\section{Simulations}
This section presents a simulation example to demonstrate  the proposed policy synthesis  method for  the  MDPs with sequentially observed transitions. In this application, autonomous vehicles (agents)  explore a region $F$, which can be partitioned into $n$ disjoint subregions (or bins) $F_i$ for  $i=1, \dots, n$ such that $F=\cup _iF_i$ \cite{Acikmese:2012Ama, behcet_tac15}.  We can model the system as an MDP where the states of agents are their  bin locations and the actions of a vehicle are defined by the possible transitions to neighboring bins.  Each vehicle collects rewards while traversing the area where, due to the stochastic environment, transitions are stochastic (i.e., even if the vehicle's command is to move to ``right", the environment can send the vehicle to ``left"). 
 In particular, with probability $0.6$ the given command will lead to the desired bin, while with  probability $0.4$   the agent would land on another neighboring bin.
We assume a region describe by a 10 by 10 grid. Each vehicle has 5 possible actions: ``up'',  ``down'',  ``left'',  ``right'', and  ``stay''. When the vehicle is on the boundary, we set the probability of actions that cause transition outside of   the domain to zero.  The total number of states is  100 with 5 actions, and a decision time horizon $N\!=\!10$. The reward vectors $R_t$ for $t=1,\dots ,N-1$ and $R_N$  are chosen randomly with entries  in the interval $[0,100]$. Since any feasible policy for  a standard  MDP  is also a feasible solution for the proposed sequential model (i.e., $\pi_{MDP} \subseteq \pi_{SMDP}$), then the following holds:
$$v_N^{\pi ^*_{MDP}}\leq v_N^{\pi ^*_{SMDP}}.$$
Figure \ref{Utility} shows the difference in values due to optimal policies of the standard  MDP model and the proposed sequential MDP (i.e., $v_N^{\pi ^*_{SMDP}}-v_N^{\pi ^*_{MDP}}$). The figure shows that, depending on initial state, the new model can have significant improvement by utilizing the additional information  (observing the transitions before deciding on actions). 

\begin{figure}
\begin{center}
\includegraphics[scale=0.5]{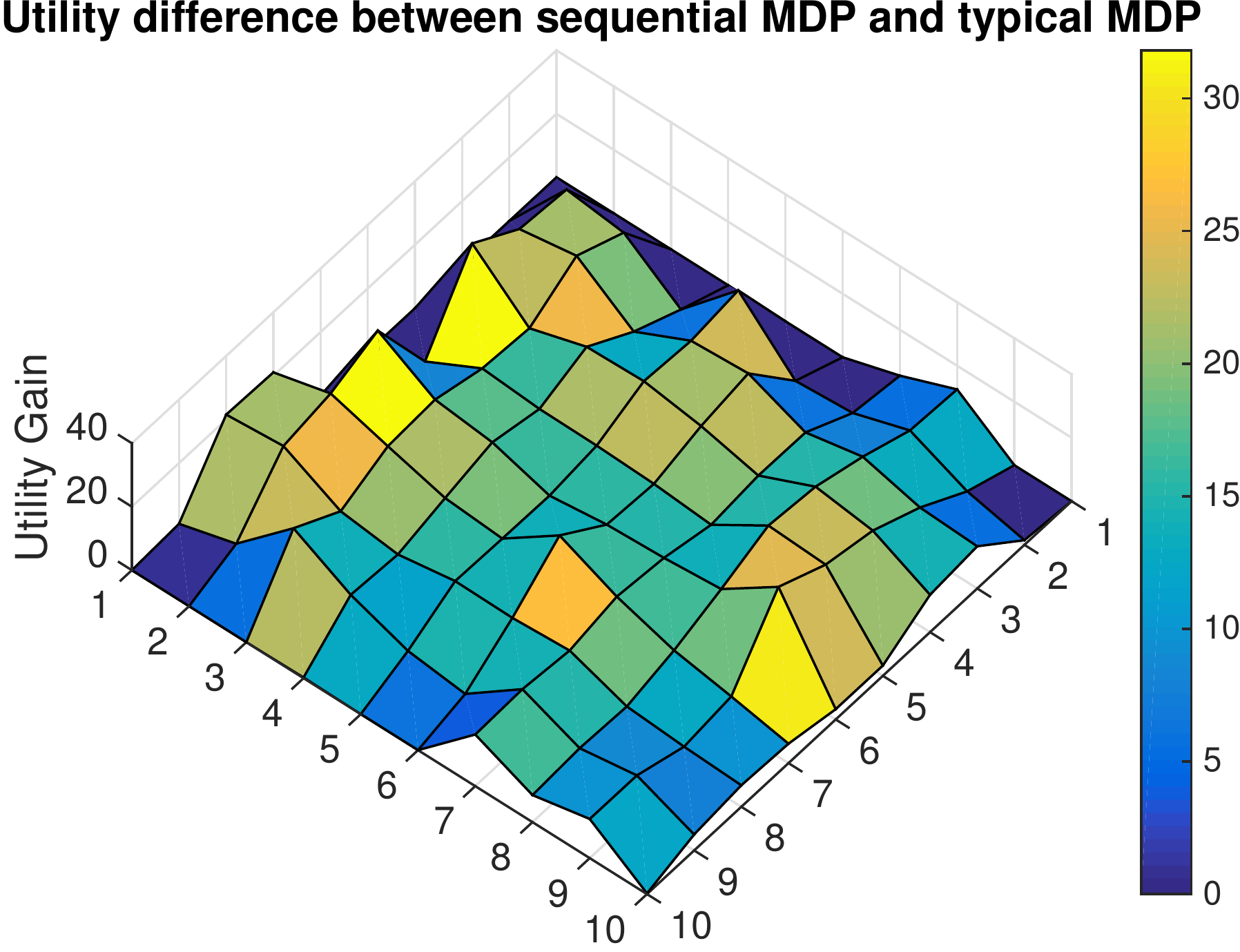}
\caption{The figure shows the difference in the utility (optimal value) of the sequential MDP strategy that takes advantage  of the observed transitions and the standard  MDP that does not use this extra information. The figure shows that the difference in the utility depends on the initial position of agents. Some bins can give a higher than expected reward than other bins.
} 
\label{Utility}
\vspace{-20pt}
\end{center}
\end{figure}

\section{Conclusion}
This paper   introduces  a novel model for MDPs that incorporates additional observations  on the transitions for a given action in a sequential manner. This model achieves better expected total rewards than the optimal policies for the standard  MDP models studied in the literature due to the utilization of   additional information. We also propose an efficient algorithm based on linear programming that allows offline  calculations of these optimal policies.

\bibliographystyle{./IEEEtran}
\bibliography{./SMDP_bib,./IEEEabrv}
\end{document}